\numberwithin{equation}{section}
\newtheorem{theorem}{Theorem}[section]
\newtheorem{lemma}[theorem]{Lemma}
\newtheorem{prop}[theorem]{Proposition}
\newtheorem{corollary}[theorem]{Corollary}
\theoremstyle{definition}
\newtheorem{remark}[theorem]{Remark}
\newcommand{\N}{{\mathbb N}}
\newcommand{\R}{{\mathbb R}}
\newcommand{\eps}{\varepsilon}
\newcommand{\cal}{\mathcal}
\def\Xint#1{\mathchoice
{\XXint\displaystyle\textstyle{#1}}%
{\XXint\textstyle\scriptstyle{#1}}%
{\XXint\scriptstyle\scriptscriptstyle{#1}}%
{\XXint\scriptscriptstyle\scriptscriptstyle{#1}}%
\!\int}
\def\XXint#1#2#3{{\setbox0=\hbox{$#1{#2#3}{\int}$ }
\vcenter{\hbox{$#2#3$ }}\kern-.6\wd0}}
\def\dashint{\Xint-}
\title[Optimal elliptic regularity: local vs nonlocal]{Optimal elliptic regularity: a comparison between local and nonlocal equations}
\author[S.  Mosconi]{S. Mosconi}
\address[S. Mosconi]{Dipartimento di Matematica e Informatica
\newline\indent
Universit\`a degli Studi di Catania
\newline\indent
Viale A. Doria 6 I-95125 Catania, Italy}
\email{mosconi@dmi.unict.it}
\begin{document}
\begin{abstract}
Given $L\geq 1$, we discuss the problem of determining the highest $\alpha=\alpha(L)$ such that any solution to a homogeneous elliptic equation in divergence form with ellipticity ratio bounded by $L$ is in $C^\alpha_{\rm loc}$. This problem can be formulated both in the classical and non-local framework. In the classical case it is known that $\alpha(L)\gtrsim {\rm exp}(-CL^\beta)$, for some $C, \beta\geq 1$ depending on the dimension $N\geq 3$. We show that in the non-local case, $\alpha(L)\gtrsim L^{-1-\delta}$ for all $\delta>0$.
\end{abstract}
\maketitle
\section{Introduction}

The aim of this short note is to highlight some analogies and differences between the H\"older regularity theory for elliptic equations in divergence form with measurable coefficients in the classical and non-local setting respectively. To this end, let us recall some well known results. 

Suppose that $A(x)=(a_{ij}(x))$ is an $N\times N$ symmetric matrix with measurable coefficients, satisfying for some $0<\lambda\leq \Lambda $ the ellipticity condition
\begin{equation}
\label{ellcond}
\lambda |\xi|^2\leq \sum_{i, j=1}^N a_{ij}(x)\xi_i\xi_j\leq \Lambda |\xi|^2,\qquad \forall x\in B_1, \ \xi\in \R^N.
\end{equation}
If $u\in W^{1,2}(B_1)$, $B_1$ being the unit ball centered at $0$, weakly solves 
\begin{equation}
\label{div}
{\rm div}(A(x)\nabla u)=0
\end{equation}
in $B_1$, then, the De Giorgi-Nash-Moser theorem states that $u$ is H\"older continuous in $B_{1/2}$. A well known open problem in this framework is to determine the so-called {\em best H\"older exponent}\footnote{Technically, is should be called {\em worst H\"older exponent}, however the adjective "best" is more used in the literature.}  for a given ellipticity ratio $L=\Lambda /\lambda$. To precisely state the problem, let 
\[
{\cal H}_L=\big\{u\in W^{1,2}(B_1): \exists\  a_{ij} \text{ satisfying \eqref{ellcond} with $\frac{\Lambda}{\lambda}\leq L$, s.t. $u$ solves \eqref{div}}\big\}.
\] 
By the linearity and homogeneity of the equation, we can multiply \eqref{div} by $\lambda^{-1}$, so that we can assume $\lambda=1$ in \eqref{ellcond} (and then $\Lambda=L$).
The {\em best H\"older exponent} for solutions of elliptic equations with a given ellipticity ratio $L$ is
\[
\bar{\alpha}(N, L):=\sup\big\{\alpha: {\cal H}_L\subseteq  C^\alpha_{\rm loc}(B_{1})\big\}.
\]
In \cite{PS}, the following is proved:
\[
\bar{\alpha}(2, L)=\frac{1}{\sqrt{L}},\qquad \bar{\alpha}(N, L)< \frac{N-1}{N-2}\frac{1}{L}\quad \text{if $N\geq 3$}.
\]
It is conjectured that $\bar{\alpha}(N, L)=C(N)/L$, but the problem of determining $\bar{\alpha}(N, L)$ for $N\geq 3$ is open up to now. The best available lower bound on $\bar\alpha$ can be found keeping trace of the constants in the various proofs of the De Giorgi-Nash-Moser theorem: as it turns out (in all three proofs!), one finds for $N\geq 3$
\begin{equation}
\label{alpha}
\bar\alpha(N, L)\geq \exp (-C\, L^\beta), \qquad C=C(N), \ \beta=\beta(N)>1,
\end{equation}
which is of course (exponentially) far from the known upper bound.
\smallskip

Let us mention that after the work of Moser \cite{MH}, many H\"older regularity results are nowadays  derived from the Harnack inequality. Indeed, any {\em positive} solution of \eqref{div} in a ball $B_r$ satisfies
\[
\sup_{B_{r/2}}u\leq c_{H}\inf_{B_{r/2}} u
\]
for a constant $c_H$ independent of $r$.
By a standard argument, this implies that any solution of \eqref{div} satisfies
\[
{\rm osc}(u, 2^{-(n+1)})\leq \frac{c_H-1}{c_H+1}{\rm osc}(u, 2^{-n}),\qquad n\geq 1
\]
(here ${\rm osc}(u, r)=\sup_{B_{r}}u-\inf_{B_{r}} u$), which in turn gives the oscillation estimate 
\[
{\rm osc}(u, r)\leq C\, {\rm osc}(u, 1) \, r^{\alpha_H},\qquad \alpha_H\simeq \frac{1}{c_H}.
\]
In other words, the {\em best }(scale invariant) {\em Harnack constant}  for \eqref{div} bounds the inverse of the best H\"older exponent. Unfortunately, it is well known that the best Harnack constant is in fact exponential in the ellipticity ratio: for example, the function
$u(x, y)=e^{\sqrt{L}x} \cos y$ satisfies \eqref{div} for 
\[
A=
\begin{pmatrix}
1&0\\
0& L
\end{pmatrix}
\]
but for any $r\leq 1$ it holds
\[
\sup_{B_r} u=e^{\sqrt{L}r},\quad \inf_{B_r} u\leq e^{-\sqrt{L} r},\qquad c_H=\sup_{0<r<1}\frac{\sup_{B_r} u}{\inf_{B_r} u}\geq e^{2\sqrt{L}}.
\]
This example shows that one cannot get rid of the exponential in \eqref{alpha} by proving a scale invariant Harnack inequality
\footnote{By scale invariant we mean with a constant independent on $r$ for {\em small} $r$.} and it is not so surprising that  the methods in \cite{DG} \cite{N} and \cite{M} give the bound \eqref{alpha} since each technique actually proves a Harnack inequality as well (see \cite{DBT}, \cite{FS} and \cite{MH}, respectively). 

Our main result shows that the nonlocal analogue of \eqref{div} seems to enjoy better properties than \eqref{div} itself. By a non-local analogue of solutions to \eqref{div}  we mean local minimizers of the non-local functional
\begin{equation}
\label{DirF}
\int_{\R^{2N}}a(x, y)\frac{(u(x)-u(y))^2}{|x-y|^{N+2s}}\, dx\, dy,
\end{equation}
as opposed to local minimizers of 
\begin{equation}
\label{Dir}
\int_{\Omega} \big| A^{1/2}\nabla u\big|^2 dx
\end{equation}
giving rise to solutions of \eqref{div}.
Let us recall in this regard that, for suitable $a(x, y)$, \eqref{DirF} approximates \eqref{Dir} in a suitable sense when $s\to 1$. This can be seen, at least for what concerns pointwise convergence, through \cite[Corollary 1]{BBM} (see also \cite[Theorem 3.1]{BPS}): if $0< \lambda\leq a(x)\leq \Lambda$, then
\[
\lim_{s\to 1}\int_{\R^{2N}}(1-s)a(x)\frac{(u(x)-u(y))^2}{|x-y|^{N+2s}}\, dx= C(N)\int_{\Omega} a(x)|\nabla u|^2,\qquad \forall u\in W^{1,2}_0(\Omega).
\]
In turn, isotropic functionals (i.e. \eqref{Dir} with $a_{ij}(x)=a(x)\delta_{ij}$) are dense with respect to Mosco-convergence in the set of general elliptic functionals \eqref{Dir} by \cite{MT} (see also \cite{CS} for a more general density result and references). 

Notice that, when looking at local minimizers of \eqref{DirF}, we can always suppose that $a(x, y)$ is symmetric, since 
\[
\int_{\R^{2N}}a(x, y)\frac{(u(x)-u(y))^2}{|x-y|^{N+2s}}\, dx\, dy=\int_{\R^{2N}}\frac{a(x, y)+a(y, x)}{2}\frac{(u(x)-u(y))^2}{|x-y|^{N+2s}}\, dx\, dy.
\]
In this framework, the ellipticity condition \eqref{ellcond} reads $\lambda\leq a(x, y)\leq \Lambda$, which implies the same condition of the symmetrized coefficient
\[
k(x, y)=\frac{a(x, y)+a(y, x)}{2}.
\]
Local minimizers of \eqref{DirF} in, say, $B_1$, satisfy
\begin{equation}
\label{eq}
\int_{\R^{2N}}(u(x)-u(y))(\varphi(x)-\varphi(y))\, \frac{k(x, y)\, dx\, dy}{|x-y|^{N+2s}}=0\qquad \forall \varphi\in W^{s,2}_0(B_1),
\end{equation}
where
\begin{equation}
\label{w0}
W^{s,2}_0(B_1)=\left\{v\in L^1(\R^N): v\equiv 0\ \text{in $\R^N\setminus B_1$}, \ [v]_s<+\infty\right\},
\end{equation}
and
\[
[v]_s=\left(\int_{\R^{2N}}\frac{(v(x)-v(y))^2}{|x-y|^{N+2s}}\, dx\, dy\right)^{1/2}.
\]
Normalizing, we will suppose henceforth
\begin{equation}
\label{condk}
1\leq k(x, y)=k(y, x)\leq L=\frac \Lambda \lambda \quad \text{for a.e. $x$, $y$}.
\end{equation}

We can now state our main result, in a simplified version (for the full result see Theorem \ref{mt}).

\begin{theorem}
\label{main}
Let $u\in L^\infty(\R^N)$ be such that $[u]_s<+\infty$ and \eqref{eq} holds under condition \eqref{condk} on $k$. Then for any $\delta>0$, $u\in C^{\alpha_\delta}(B_{1/2})$ with $\alpha_\delta=a_\delta/L^{1+\delta}$.
\end{theorem}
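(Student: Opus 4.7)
My strategy is to run the De Giorgi oscillation-reduction scheme for \eqref{eq}, tracking the dependence on $L$ through every step. The goal is a quantitative oscillation decay
\[
\mathrm{osc}(u, r/2) \leq (1 - \eta_L)\, \mathrm{osc}(u, r), \qquad \eta_L \geq c_\delta\, L^{-(1+\delta)},
\]
which by dyadic iteration gives directly the claimed H\"older exponent $\alpha_\delta \sim L^{-(1+\delta)}$. The reason for optimism is structural: in the local case the $L^p$-chain of Moser's iteration forces $L$ to appear at each step and thus exponentially, while in the non-local framework $L$ enters the energy estimate only linearly, opening the door to a polynomial bound.

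\textbf{Step 1: non-local Caccioppoli estimate.} Testing \eqref{eq} with $\varphi = \eta^2 (u-k)_\pm$ for a smooth cut-off $\eta$ supported in $B_r$ and an appropriate level $k$, a standard but careful computation yields
\[
[\eta (u-k)_\pm]_s^2 \leq C\, L \bigl(\mathcal{E}_{\mathrm{cut}}(\eta, (u-k)_\pm) + \mathcal{T}_s(u;\, k,\, r)\bigr),
\]
where the coercive left-hand side absorbs only the lower bound $k(x,y) \geq 1$ of \eqref{condk}, and the single factor of $L$ on the right arises from the upper bound applied to the cut-off cross-terms and the non-local tail. This single-$L$ Caccioppoli is the pivotal technical input.

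\textbf{Step 2: De Giorgi level-set lemma with polynomial threshold.} Iterating Step 1 on the levels $k_j = 1 - 2^{-j-1}$, combined with the fractional Sobolev embedding $\dot H^s(\R^N) \hookrightarrow L^{2N/(N-2s)}(\R^N)$ (whose constant is independent of $L$), one obtains a smallness threshold $\mu_0 \geq c\, L^{-\theta}$ such that for every subsolution $u \leq 1$ in $\R^N$ with controlled tail,
\[
|\{u > 0\} \cap B_1| \leq \mu_0 \quad \Longrightarrow \quad u \leq 1/2 \text{ in } B_{1/2}.
\]
The polynomial-in-$L$ character of $\mu_0$ is the direct quantitative reflection of the linear-in-$L$ Caccioppoli, and a suitable interpolation between the Sobolev embedding and the $L^\infty$ bound (available thanks to the hypothesis $u \in L^\infty(\R^N)$) allows one to bring the effective exponent $\theta$ as close to $1$ as desired, at the cost of a $\delta$-loss.

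\textbf{Step 3: measure propagation, oscillation decay, and the main obstacle.} A non-local analogue of De Giorgi's intermediate-level lemma (bounding $|\{a < u < b\} \cap B_1|$ below via the Caccioppoli energy) implies that the measures $m_j = |\{u > 1 - 2^{-j}\} \cap B_1|$ contract geometrically. Starting from $|\{u \leq 0\} \cap B_1| \geq 1/2$ and running the iteration until $m_{j^\ast} \leq \mu_0$ takes $j^\ast \sim (1+\delta)\log_2 L$ steps; at that point Step 2 gives $u \leq 1 - 2^{-j^\ast-1}$ in $B_{1/2}$, precisely the oscillation improvement $\eta_L \sim L^{-(1+\delta)}$ that the plan required. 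The hardest part of the whole program is to propagate the non-local tails through the infinite dyadic zoom-in: each rescaling reintroduces an $L$-weighted tail of $u$, and unless these are uniformly absorbed into the smallness condition of Step 2, they would amplify constants across scales and destroy the polynomial dependence on $L$. The $\delta$-loss in the final exponent is exactly the price of maintaining this uniform tail control along the iteration, combined with the interpolation optimization in Step 2.
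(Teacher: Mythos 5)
Your plan is a De Giorgi measure-propagation scheme. This is \emph{not} the paper's approach, and more importantly it has a genuine gap at the crucial step. The paper explicitly notes that the De Giorgi--Moser scheme in the nonlocal setting (as implemented in \cite{K0}, \cite{DKP}) still yields the exponential bound \eqref{alpha}; the improvement to a polynomial exponent is precisely what requires a different mechanism.

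The gap is in your Step 3. You assert that the non-local intermediate-level lemma makes the measures $m_j=|\{u>1-2^{-j}\}\cap B_1|$ ``contract geometrically,'' from which you deduce that only $j^\ast\sim(1+\delta)\log_2 L$ halvings are needed. But the De Giorgi isoperimetric (intermediate-level) lemma, combined with your single-$L$ Caccioppoli estimate, gives a \emph{telescoping} decay, not a geometric one: the energy of the truncation at level $1-2^{-j}$ is $\lesssim L\,2^{-2j}$, and the isoperimetric inequality then yields $m_j-m_{j+1}\gtrsim (2^{-j})^2/(L\,2^{-2j})=c/L$. Summing these disjoint contributions forces $j^\ast\lesssim L/c$ to reach any threshold below $m_0$, and the resulting oscillation drop is $2^{-j^\ast}\sim 2^{-cL}$ --- exponential in $L$, exactly the bound you were trying to improve. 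Nothing in the nonlocal energy estimate changes the \emph{arithmetic} nature of this decay; the ``single factor of $L$'' in the Caccioppoli inequality is equally present in the local case and is not where the exponential loss originates. Your interpolation and tail remarks do not address this point. Without a mechanism producing geometric contraction of the $m_j$, the program yields no improvement over \eqref{alpha}.

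What the paper actually does is avoid measure propagation altogether. It first shows, via a calibrated Moser iteration applied to \emph{negative powers of the torsion function} $v$ (solution of ${\cal K}v=1$ in $B_1$, $v\equiv 0$ outside), that $\inf_{B_{1/2}}v\gtrsim c_\delta/L^{1+\delta}$; this iteration works because the source term $1$ provides a free quantity to absorb, unlike the homogeneous equation. It then exploits nonlocality in an essential way: for a nonnegative supersolution $u$ one compares $u$ with the barrier $w=\theta M\,v+\chi_{B_2\setminus B_{3/2}}u$, where $M$ is the average of $u$ on the annulus. The added term is supported away from $B_1$, yet ${\cal K}(\chi_{B_2\setminus B_{3/2}}u)<0$ inside $B_1$ --- an effect with no local analogue --- so the comparison principle gives $\inf_{B_{1/2}}u\geq \theta M\inf_{B_{1/2}}v\gtrsim M/L^{1+\delta}$ in \emph{one step}. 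Applying this weak Harnack inequality to $M_n-u$ and $u-m_n$ and adding yields the oscillation decay with constant $\sim L^{-(1+\delta)}$ directly, with no dyadic level iteration. Your proposal would need to supply an argument for geometric contraction of the level sets to be salvaged; as written, Step~3 fails.
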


Let us notice that the De Giorgi-Nash-Moser regularity theory for equations like \eqref{eq} has already been developed in \cite{K0}, \cite{DKP}. The method of proof in these papers is usually a (nontrivial) modification of the De Giorgi-Moser approach, and therefore provides a much smaller H\"older exponent than ours, namely, the one given in \eqref{alpha}. 
The main point of our result is therefore the improvement, in the non-local case, of the (inverse of the) best H\"older exponent from exponential in $L$ to almost linear in $L$. This is mainly due to a weak Harnack inequality which is proved, much in the spirit of \cite{IMS}, \cite{IMS2}, through a non-local barrier argument, a method which avoids the usual De Giorgi-Moser arguments. We remark that the obtained H\"older exponent {\em blows down} as $s\to 1^-$, i.e.  our estimates are exclusively  non-local.

Let us finally mention that, starting from the seminal work of Bass and Levine \cite{BL}, a huge literature has grown around the regularity theory for some related non-local equations, which, very roughly speaking, may be seen as the non-local counterpart of {\em non-divergence form} elliptic equations, see \cite[Section 3.6]{S} for a discussion. The interested reader can consult the bibliographic references in \cite{SS}.

\medskip

The structure of the paper is as follows.
In section 2 we will describe the functional analytic framework we will work in. Section 3 is devoted to a lower bound of the torsion function, i.e., the solution $v$ of 
\[
\int_{\R^{2N}}(v(x)-v(y))(\varphi(x)-\varphi(y))\, \frac{k(x, y)\, dx\, dy}{|x-y|^{N+2s}}=\int_{\R^N} \varphi\, dx, \qquad \forall \varphi\in W^{s,2}_0(B_1).
\]

Despite $v$ being, strictly speaking, a supersolution of \eqref{eq}, the non-local nature of the equation allows to use it as a {\em lower barrier} to \eqref{eq}, when suitably modified away from $B_1$. This idea will be realized in section 4, giving a weak Harnack inequality and, eventually, the claimed H\"older exponent.

\section{Preliminaries}

In all the paper $N\geq 1$, $s\in \ ]0, 1[$, $L\geq 1$ will be fixed. We also fix $k(x, y)$ satisfying \eqref{condk}, and for simplicity we let 
\begin{equation}
\label{mu}
d\mu=\frac{k(x, y)}{|x-y|^{N+2s}}\, dx\, dy.
\end{equation}
All constants, unless otherwise specified, will depend on $N$ and $s$ only,  and are allowed to change value from line to line keeping the same symbol, as long as this dependance is unaltered. When a constant depends also on some other parameter $\delta$, it will be denoted as, say, $C_\delta$, i.e. $C_\delta=C(N, s, \delta)$.

For any $A\subseteq \R^N$ we will set $A^c=\R^N\setminus A$ and $B_r\subseteq \R^N$ will denote the open ball of radius $r>0$, centered at $0$.  For simplicity, $\sup_A u$ and $\inf_A u$ will denote the essential supremum and infimum of $u$ on a measurable set $A$, and $\|u\|_\infty$ will be the $L^\infty(\R^N)$ of $u$. For any $a\in \R$, we also let $a_+=\max\{a, 0\}$, $a_-=\min\{a, 0\}$. 

Given any\footnote{For our purposes $\Omega$ will  always be a ball, so that delicate issues for the defined space when $\Omega$ is irregular/unbounded are not relevant here} open $\Omega\subseteq \R^N$ and measurable $u:\Omega\to \R$, we define the Gagliardo semi-norms
\[
[ u ]_{s, \Omega}:=\left(\int_{\Omega\times\Omega} \frac{(u(x)-u(y))^2}{|x-y|^{N+2s}}\, dx\, dy\right)^{1/2},\qquad [u]_s:=[u]_{s, \R^N}
\]
and the corresponding functional spaces 
\[
\widetilde{W}^{s, 2}(\Omega):=\left\{u:\exists\,  A\supset \overline{\Omega},  \text{ \, $A$ open, s.t. }\  [u]_{s, A}+\int_{A^c}\frac{|u(x)|}{1+|x|^{N+2s}}\, dx<+\infty\right\}.
\]
Clearly, as long as $u\in L^\infty(\R^N)$, the second condition in the previous definition is trivially satisfied.
The space $W^{s,2}_0(\Omega)$ is defined as in \eqref{w0}, substituting $B_1$ with $\Omega$, and its dual is denoted by $W^{-s, 2}(\Omega)$.
Suitably modifying \cite[Lemma 2.3]{IMS}, we see that for any $u\in \widetilde{W}^{s,2}(\Omega)$, the functional 
\[
W^{s,2}_0(\Omega)\ni \varphi\mapsto \int_{\R^{2N}}(u(x)-u(y))(\varphi(x)-\varphi(y))\, d\mu=:\langle {\cal K} u, \varphi\rangle
\]
is linear and continuous in $W^{s,2}_0(\Omega)$, where $\mu$ is defined in \eqref{mu} and $k$ fulfills \eqref{condk}. Therefore the notation used for ${\cal K}$ above is well posed when $\langle \ , \ \rangle$ denotes the duality bracket of $W^{s, 2}_0(\Omega)$ and ${\cal K}: \widetilde{W}^{s,2}(\Omega)\to W^{-s, 2}(\Omega)$ is clearly a linear operator. Using the linearity of the equation it is therefore possible to solve any Dirichlet problem of the form 
\[
\begin{cases}
{\cal K} u=0 &\text{in $\Omega$},\\
u=u_0&\text{in $\Omega^c$}
\end{cases}
\]
for any given $u_0\in \widetilde{W}^{s,2}(\Omega)$, simply minimizing the convex coercive functional
\[
W^{s,2}_0(\Omega)\ni v\mapsto \frac{1}{2}[v]_{s}^2+\langle {\cal K} u_0, v\rangle
\]
and letting $u=v+u_0\in \widetilde{W}^{s,2}(\Omega)$.
This setting also gives a meaning to inequalities such as ${\cal K} u\geq f$ in $\Omega$ for $u\in \widetilde{W}^{s,2}(\Omega)$, $f\in W^{-s, 2}(\Omega)$ by testing with nonnegative $\varphi\in W^{s,2}_0(\Omega)$. 

\medskip
We will make use of the following comparison principle, obtained by a slight modification of \cite[Proposition 2.10]{IMS}.

\begin{prop}{\em (Comparison Principle)}
Let $\Omega$ be bounded and $u, v\in \widetilde{W}^{s,2}(\Omega)$ be such that 
\[
\begin{cases}
\langle {\cal K} u, \varphi\rangle \leq \langle \cal K v, \varphi\rangle& \forall \varphi\in W^{s,2}_0(\Omega), \ \varphi\geq 0\\
u\leq v&\text{in $\Omega^c$}.
\end{cases}
\]
Then $u\leq v$ in $\R^N$.
\end{prop}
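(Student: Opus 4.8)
The plan is to reduce the global comparison statement to a purely Hilbert-space argument on the bilinear form associated with $\mathcal{K}$. First I would set $w = (u-v)_+$ and observe that, since $u \leq v$ in $\Omega^c$, the function $w$ vanishes on $\Omega^c$; moreover $w \in W^{s,2}_0(\Omega)$ because $w \leq |u| + |v|$ is bounded away from $\Omega$ and $[w]_{s,\Omega} < \infty$ (the positive-part map is $1$-Lipschitz, so $[w]_s \leq [u-v]_s$, which is finite for $u,v \in \widetilde W^{s,2}(\Omega)$). Thus $w$ is an admissible nonnegative test function in the hypothesis, giving $\langle \mathcal{K}u - \mathcal{K}v, w\rangle \leq 0$, i.e.
\[
\int_{\R^{2N}} \big((u-v)(x) - (u-v)(y)\big)\big(w(x)-w(y)\big)\, d\mu \leq 0 .
\]

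The key step is then the elementary pointwise inequality, valid for any real numbers $a,b$ with $a = (u-v)(x)$, $b=(u-v)(y)$ and $w = (\,\cdot\,)_+$:
\[
(a-b)\big(a_+ - b_+\big) \geq (a_+ - b_+)^2 ,
\]
which one checks by cases on the signs of $a$ and $b$ (when both are nonnegative it is an equality; when $a\geq 0 > b$ one has $(a-b)a_+ = a^2 - ba_+ \geq a^2 = (a_+-b_+)^2$; the remaining cases are symmetric or trivial). Since $d\mu \geq 0$, integrating this inequality yields
\[
[w]_s^2 = \int_{\R^{2N}} \big(w(x)-w(y)\big)^2\, d\mu \leq \int_{\R^{2N}} \big((u-v)(x)-(u-v)(y)\big)\big(w(x)-w(y)\big)\, d\mu \leq 0 ,
\]
using also $1 \leq k(x,y)$ from \eqref{condk} to drop $k$ in the first inequality. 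Hence $[w]_s = 0$, so $w$ is a.e.\ constant; being zero on $\Omega^c$ (a set of positive measure), $w \equiv 0$, i.e.\ $u \leq v$ a.e.\ in $\R^N$.

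The main obstacle — really the only subtle point — is the justification that $w = (u-v)_+$ genuinely lies in $W^{s,2}_0(\Omega)$ and is therefore a legitimate test function, since the hypothesis only licenses testing against $W^{s,2}_0(\Omega)$. This is where boundedness of $\Omega$ and the structure of $\widetilde W^{s,2}(\Omega)$ enter: one needs $u,v$ to have a common open neighbourhood $A \supset \overline{\Omega}$ on which their Gagliardo seminorms are finite, together with the tail integrability, so that $u-v$ has finite energy against the full kernel and the truncation $w$ inherits $[w]_s < \infty$; the fact that $w$ is supported in $\overline{\Omega}$ and bounded then places it in $W^{s,2}_0(\Omega)$. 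This is precisely the content of the "slight modification of \cite[Proposition 2.10]{IMS}" alluded to, and beyond checking it the argument is routine.
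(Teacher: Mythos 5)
Your approach is correct and is precisely the standard argument: test the difference inequality with $w=(u-v)_+$, use the pointwise monotonicity inequality $(a-b)(a_+-b_+)\geq(a_+-b_+)^2$ to conclude $[w]_s=0$, hence $w$ is a.e.\ constant, and finally use that $w$ vanishes on the set $\Omega^c$ (of infinite measure, since $\Omega$ is bounded) to get $w\equiv 0$. The paper gives no proof here, referring instead to a ``slight modification of \cite[Proposition~2.10]{IMS}''; that proposition is proved exactly along these lines, so you have reconstructed the intended argument.

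Two small remarks. First, your displayed chain should begin $[w]_s^2\leq\int(w(x)-w(y))^2\,d\mu$ rather than with an equality, since $d\mu$ carries the extra factor $k(x,y)\geq 1$; you clearly know this (you invoke $k\geq 1$ in the surrounding text), but the display as written asserts an equality that does not hold unless $k\equiv 1$. Second, your parenthetical claim that $w$ is ``bounded'' is not warranted by the hypotheses, as elements of $\widetilde W^{s,2}(\Omega)$ need not be bounded; fortunately boundedness is not what is needed. The admissibility of $w$ as a test function follows from $[w]_s\leq[u-v]_s$ (by $1$-Lipschitzness of the positive part) combined with $w\equiv 0$ on $\Omega^c$ and the fact that $\Omega$ is bounded, which guarantees $\dist\big(\overline\Omega, A^c\big)>0$ for a common admissible neighbourhood $A$ of $\overline\Omega$ and hence controls the cross term $\int_{A\times A^c}w(x)^2|x-y|^{-N-2s}\,dx\,dy$. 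You correctly identify the membership $w\in W^{s,2}_0(\Omega)$ as the only genuinely technical point, and that is indeed the content of the cited modification of \cite[Proposition~2.10]{IMS}.
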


\section{Lower bound on the torsion function}

This section is devoted to obtain a lower bound, in terms of the ellipticity ratio $L$, for the torsion function, namely the solution to ${\cal K} u=1$ with Dirichlet boundary conditions. We do not know wether the following proposition holds with $\delta=0$, a fact that would eventually provide a H\"older exponent of order $1/L$ in Theorem \ref{main}

\begin{prop}
\label{tor}
Assume that \eqref{condk} holds and let $u$ weakly solve 
\begin{equation}
\label{torsion}
\begin{cases}
{\cal K}u=1&\text{in $B_r$},\\
u\equiv 0&\text{in $B_r^c$}.
\end{cases}
\end{equation}
Then for any $\delta>0$ there exists a constant $c_\delta=c(N, s, \delta)$ such that 
\begin{equation}
\label{lbtorsion}
\inf_{B_{r/2}} u\geq \frac{c_\delta\, r^{2s}}{L^{1+\delta}}.
\end{equation}
\end{prop}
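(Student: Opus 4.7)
My plan is to construct an explicit lower barrier $\phi\in\widetilde{W}^{s,2}(B_r)$, vanishing outside $B_r$, satisfying ${\cal K}\phi\leq 1$ weakly and bounded below by $c_\delta r^{2s}/L^{1+\delta}$ on $B_{r/2}$; the comparison principle from Section~2 then delivers $u\geq\phi$ pointwise.

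\smallskip

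By the scaling $\tilde u(x):=r^{-2s}u(rx)$, which preserves both \eqref{torsion} (now posed on $B_1$) and the ellipticity range $[1,L]$ of the rescaled kernel $\tilde k(x,y):=k(rx,ry)$, I may take $r=1$. The natural ansatz is $\phi:=c_\delta L^{-1-\delta}\phi_0$, where $\phi_0(x):=(1-|x|^2)_+^s$ is the fractional torsion in $B_1$, known to satisfy $(-\Delta)^s\phi_0=c_1(N,s)$ pointwise and to obey the lower bound $\phi_0\geq(3/4)^s$ on $B_{1/2}$. Decomposing $k=1+(k-1)$, for any non-negative $\varphi\in W^{s,2}_0(B_1)$,
\[
\langle {\cal K}\phi,\varphi\rangle = \frac{2c_\delta c_1}{L^{1+\delta}}\int_{B_1}\varphi + \frac{c_\delta}{L^{1+\delta}}\iint_{\R^{2N}}(\phi_0(x)-\phi_0(y))(\varphi(x)-\varphi(y))\frac{k(x,y)-1}{|x-y|^{N+2s}}\,dx\,dy,
\]
and the ``frozen-coefficient'' first piece is $\leq\tfrac12\int\varphi$ provided $c_\delta$ is small enough.

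\smallskip

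The heart of the matter is to bound the error bilinear form by $CL^{\delta}\int_{B_1}\varphi$, so that the prefactor $c_\delta L^{-1-\delta}$ drives it under $\tfrac12\int\varphi$. A blunt Cauchy--Schwarz only yields $(L-1)[\phi_0]_s[\varphi]_s$, which is useless because $[\varphi]_s$ is not controlled by $\int\varphi$. I would instead split the $\R^{2N}$-integration at a scale $|x-y|\gtrless\rho$ with $\rho=L^{-a(\delta)}$ to be optimized. On $|x-y|\geq\rho$, the $L^\infty$-bound $\|\phi_0\|_\infty\leq 1$ and the integrability of $|x-y|^{-N-2s}$ off the diagonal give a bound of order $L\rho^{-2s}\int\varphi$. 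On $|x-y|<\rho$, I exploit the sharp Hölder regularity $\phi_0\in C^s(\R^N)$ and apply Hölder's inequality with conjugate exponents $(1+\delta,1+\delta^{-1})$ against a fractional-Gagliardo-type norm of $\varphi$, producing a bound $\lesssim L\rho^{\eta(\delta)}[\varphi]_s$; Young's inequality then converts the $[\varphi]_s$ into $C L^{1+\delta}\int\varphi+\epsilon[\varphi]_s^2$, with the quadratic term reabsorbed into a coercive piece obtained by testing the equation for $u$ with $u-c_\delta L^{-1-\delta}\phi_0$. Balancing the two regimes by optimising $\rho$ produces the claimed $L^\delta$ loss.

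\smallskip

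Once ${\cal K}\phi\leq 1$ is verified and $\phi$ vanishes outside $B_1$, the comparison principle of Section~2 yields $u\geq\phi$ in $\R^N$, hence $\inf_{B_{1/2}}u\geq c_\delta(3/4)^s L^{-1-\delta}$, which is the claim. The central difficulty is the error-form estimate: for merely measurable symmetric $k$, the operator ${\cal K}\phi_0$ has no pointwise principal-value meaning when $s\geq 1/2$, because the linear part $\nabla\phi_0(x)\cdot(y-x)$ integrated against the non-$z$-symmetric remainder $(k(x,x+z)-1)/|z|^{N+2s}$ diverges at the origin. This structural fact forces the weak treatment and accounts for the $L^\delta$ loss, which is precisely why, as remarked above the statement, the case $\delta=0$ is beyond the reach of this method.
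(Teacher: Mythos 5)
Your approach is fundamentally different from the paper's, and unfortunately it has a gap that I do not believe can be repaired along the lines you sketch. The paper does not construct an explicit barrier at all: it proves the lower bound via a Moser-type iteration on \emph{negative} powers of $u$. Concretely, one tests \eqref{torsion} with $\eta^{\beta+1}u_\eps^{-\beta}$ to derive a Caccioppoli inequality for $u_\eps^{-\beta}$, iterates it first a finite number $m$ of times (with integer increments of $\beta$, paying a factor $L^{1-1/m}$) and then in the usual geometric Moser fashion (paying a factor $L^{O(1/m)}$), and finally controls $\|u_\eps^{-1}\|_{L^1}$ by testing with $\eta^2/u_\eps$, which costs one more factor of $L$. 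Choosing $m$ large yields $\sup u^{-1}\lesssim L^{1+O(1/m)}$, i.e.\ the claim. Your barrier idea is appealing, but the argument cannot close as written.

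The crucial step in your plan is the estimate of the error bilinear form
\[
\mathcal{E}(\varphi):=\iint_{\R^{2N}}(\phi_0(x)-\phi_0(y))(\varphi(x)-\varphi(y))\,\frac{k(x,y)-1}{|x-y|^{N+2s}}\,dx\,dy
\]
by something of the form $CL^{1+\delta}\int\varphi$ (possibly plus a small multiple of $[\varphi]_s^2$ to be absorbed). This cannot be achieved by the Cauchy--Schwarz/H\"older/Young manipulations you describe. A Young-type inequality applied to a product $[\phi_0]_s[\varphi]_s$ produces a \emph{constant} (independent of $\varphi$) plus a multiple of $[\varphi]_s^2$; there is no mechanism in that inequality to create the linear term $\int_{B_1}\varphi$. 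Splitting the double integral at $|x-y|\gtrless\rho$ does not help: the far regime $|x-y|\geq\rho$ indeed yields a term $\lesssim L\rho^{-2s}\int\varphi$, but the near regime $|x-y|<\rho$ again produces only Gagliardo-type quantities in $\varphi$ (the natural bound is $\lesssim L\,\rho^{s}[\phi_0]_s[\varphi]_s$, or variants thereof), and again there is no way to trade $[\varphi]_s$ for $\int\varphi$. Your final paragraph actually explains precisely \emph{why}: for measurable $k$ and $s\geq 1/2$ the distribution ${\cal K}\phi_0$ is not in $L^\infty_{\rm loc}(B_1)$, i.e.\ no bound $|\langle{\cal K}\phi_0,\varphi\rangle|\lesssim\int\varphi$ can hold uniformly over nonnegative test functions. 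Absorbing the residual $[\varphi]_s^2$ via a coercivity obtained by testing against $(\phi-u)_+$ is also not available, because after Young's inequality the constant term $\frac{L^2}{\eps}[\phi_0]_s^2$ dominates and has no counterpart to cancel it. So the scheme breaks down at the exact point you identify as ``the heart of the matter''.

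Separately, even if this worked, note that the explicit profile $(1-|x|^2)_+^s$ solves the torsion problem for $(-\Delta)^s$, i.e.\ $k\equiv1$; in the paper's normalization \eqref{condk} the kernel ranges over $[1,L]$, so the ``frozen-coefficient'' equation is \emph{not} $(-\Delta)^s\phi_0=c_1$ but something in between, and there is no canonical constant-coefficient operator to freeze to. This is a secondary issue compared with the error estimate, but it reinforces that a direct barrier built from the pure-power profile is the wrong tool here; the strength of the paper's Moser-type argument is precisely that it never needs to separate ${\cal K}$ into a good part and an error part.
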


\begin{proof}
First we show that by a scaling argument we can suppose $r=1$. Let $u$ solve \eqref{torsion} and define 
\[
u_{(r)}(x)=u(rx), \qquad k_{(r)}(x, y)=k(rx, ry)
\]
and ${\cal K}_{(r)}$ the corresponding linear operator.
Then by changing variables it follows that $u_{(r)}\in W^{s, 2}_0(B_1)$ satisfies ${\cal K}_{(r)}u_{(r)}=r^{2s}$, while clearly $k_{(r)}$ still satisfies the bounds \eqref{condk}. The linearity of the equation then gives that $u_{(r)}r^{-2s}$ satisfies \eqref{torsion} in $B_1$, which gives the general statement \eqref{lbtorsion}. Therefore we will suppose henceforth that $r=1$.

{\bf Step 1}: {\em Caccioppoli inequality for negative powers of $u$}.

By the weak minimum principle it holds $u\geq 0$ in $B_1$. We fix $\eps>0$ and let $u_\eps=u+\eps$. For any $\rho_2<\rho_1<1$, we choose a cutoff function 
\begin{equation}
\label{eta}
\eta\in C^\infty_c(B_{\rho_1}, [0, 1]),\qquad  \text{$\eta\equiv 1$ \ in $B_{\rho_2}$} \qquad |\nabla \eta|\leq \frac{C}{\rho_1-\rho_2}
\end{equation}
and for any $\beta>1$ we test the equation with $\eta^{\beta+1} u_\eps^{-\beta}$ obtaining
\[
\int_{\R^N} \frac{\eta^2}{u_\eps^\beta}\, dx=\int_{\R^{2N}}(u(x)-u(y))\left(\frac{\eta^{\beta+1}(x)}{u_\eps^\beta(x)}-\frac{\eta^{\beta+1}(y)}{u_\eps^{\beta}(y)}\right)\, d\mu.
\]
We employ \cite[Lemma 2.5]{K0}, to obtain the pointwise inequality
\[
\begin{split}
(u_\eps(x)-u_\eps(y))&\left(\frac{\eta^{\beta+1}(y)}{u_\eps^\beta(y)}-\frac{\eta^{\beta+1}(x)}{u_\eps^{\beta}(x)}\right)\geq \frac{\eta(x)\eta(y)}{\beta-1}\left((\frac{\eta(x)}{u_\eps(x)})^{\frac{\beta-1}{2}}-(\frac{\eta(y)}{u_\eps(y)})^{\frac{\beta-1}{2}}\right)^2\\
&\qquad\qquad  -4\beta\, (\eta(x)-\eta(y))^2\left((\frac{\eta(x)}{u_\eps(x)})^{\beta-1}+(\frac{\eta(y)}{u_\eps(y)})^{\beta-1}\right).
\end{split}
\]
which gives
\begin{equation}
\label{cac0}
\begin{split}
&\int_{\R^N} \frac{\eta^2}{u_\eps^\beta}\, dx+\int_{\R^{2N}}\frac{\eta(x)\eta(y)}{\beta-1}\left((\frac{\eta(x)}{u_\eps(x)})^{\frac{\beta-1}{2}}-(\frac{\eta(y)}{u_\eps(y)})^{\frac{\beta-1}{2}}\right)^2 d\mu \\
&\qquad \leq 4\beta\int_{\R^{2N}}(\eta(x)-\eta(y))^2\left((\frac{\eta(x)}{u_\eps(x)})^{\beta-1}+(\frac{\eta(y)}{u_\eps(y)})^{\beta-1}\right)\, d\mu.
\end{split}
\end{equation}
The right hand side can be bounded using the symmetry and structure of $\mu$ as
\[
\begin{split}
\int_{\R^{2N}}(\eta(x)-\eta(y))^2&\left[\left(\frac{\eta(x)}{u_\eps(x)}\right)^{\beta-1}+\left(\frac{\eta(y)}{u_\eps(y)}\right)^{\beta-1}\right] \,d\mu\\
&\qquad \leq   2\,L\int_{\R^{2N}}\frac{(\eta(x)-\eta(y))^2}{|x-y|^{N+2s}}\left(\frac{\eta(x)}{u_\eps(x)}\right)^{\beta-1}\! dxdy
\end{split}
\]
and estimating the integral in $y$ through \cite[Lemma 2.3]{MM}, which gives
\begin{equation}
\label{cac1}
\begin{split}
\int_{\R^{2N}}\frac{(\eta(x)-\eta(y))^2}{|x-y|^{N+2s}}\left(\frac{\eta(x)}{u_\eps(x)}\right)^{\beta-1} dx\, dy&\leq 
C\,{\rm Lip}(\eta)^{2s}\int_{\R^N}\left(\frac{\eta}{u_\eps}\right)^{\beta-1} dx\\
&\leq \frac{C}{(\rho_1-\rho_2)^{2s}}\int_{B_{\rho_1}}u_\eps^{1-\beta} dx.
\end{split}
\end{equation}
We bound the left hand side of \eqref{cac0} from below using Sobolev-Poincar\'e inequality as follows:
\begin{equation}
\label{cac2}
\begin{split}
\int_{\R^N}& \frac{\eta^2}{u_\eps^\beta}\, dx+\int_{\R^{2N}}\frac{\eta(x)\eta(y)}{\beta-1}\left[\left(\frac{\eta(x)}{u_\eps(x)}\right)^{\frac{\beta-1}{2}}-\left(\frac{\eta(y)}{u_\eps(y)}\right)^{\frac{\beta-1}{2}}\right]^2 d\mu\\
&\geq \int_{B_{\rho_2}}u_\eps^{-\beta}\, dx+\beta^{-1}\int_{B_{\rho_2}\times B_{\rho_2}}\left(u_\eps^{\frac{1-\beta}{2}}(x)-u_\eps^{\frac{1-\beta}{2}}(y)\right)^2 \frac{dx\, dy}{|x-y|^{N+2s}}\\
&\geq \int_{B_{\rho_2}}u_\eps^{-\beta}\, dx+C\beta^{-1}\left(\int_{B_{\rho_2}}u_\eps^{2^*\frac{1-\beta}{2}}\, dx\right)^{\frac{2}{2^*}}- \frac{C\beta^{-1}}{\rho_2^{2s}}\int_{B_{\rho_2}}u_\eps^{1-\beta}\, dx.
\end{split}
\end{equation}
Inserting \eqref{cac1} and \eqref{cac2} into \eqref{cac0} and rearranging, we obtain the following Caccioppoli inequality:
\begin{equation}
\label{cac}
\int_{B_{\rho_2}}\!u_\eps^{-\beta}dx+\beta^{-1}\left(\int_{B_{\rho_2}}\!u_\eps^{2^*\frac{1-\beta}{2}} dx\right)^{\frac{2}{2^*}}\leq
CL \left(\frac{\beta}{(\rho_2-\rho_1)^{2s}}+\frac{\beta^{-1}}{ \rho_1^{2s}}\right)\int_{B_{\rho_1}}\!u_\eps^{1-\beta} dx.
\end{equation}

{\bf Step 2}: {\em The iteration}.

We first iterate \eqref{cac} neglecting the second term on the left. We fix a large $m\in \N$ to be chosen later and let
\[
\rho_i=\frac{2}{3}+\frac{1}{6}\frac{m-i}{m}, \qquad i=1, \dots, m.
\]
 From \eqref{cac} we infer
\[
\int_{B_{\rho_m}}\!u_\eps^{-m}dx\leq L\, C\int_{B_{\rho_{m-1}}}\!u_\eps^{-m+1} dx\leq \dots
\leq L^{m-1} C_m\int_{B_{\rho_1}}\!u_\eps^{-1} dx,
\]
where the constant on the left hand side satisfies
\[
C_m\leq C^m m!\, m^{2sm}, 
\]
thus giving, raising the previous estimate to the $1/m$-power,
\begin{equation}
\label{it1}
\|u_\eps^{-1}\|_{L^m(B_{2/3})}\leq C_m^{\frac 1 m}L^{1-\frac{1}{m}}\,\|u_\eps^{-1}\|_{L^1(B_{5/6})}^{1/m}.
\end{equation}
Next, we follow the standard Moser iteration starting from the power $m$ and ignoring the first term on the left of \eqref{cac}.  We let $\gamma=\frac{2^*}{2}>1$,
\[
\rho_i=\frac{1}{2}+\frac{2^{-i}}{6}, \qquad i\geq 0
\]
and use \eqref{cac} with $\beta-1=t>1$ to obtain
\[
\begin{split}
\left(\int_{B_{\rho_{i+1}}}u_\eps^{-\gamma t}\, dx\right)^{\frac{2}{2^*}}&\leq C\, t^2L \left(\frac{1}{(\rho_{i+1}-\rho_i)^{2s}}+\frac{1}{ \rho_i^{2s}}\right)\int_{B_{\rho_i}}u_\eps^{-t}\, dx\\
&\leq \frac{C\, 4^{is}t^2L}{r^{2s}}\int_{B_{\rho_i}}u_\eps^{-t}\, dx.
\end{split}
\]
The latter can be rewritten as
\[
\|u_\eps^{-1}\|_{L^{\gamma t}(B_{r_{i+1}})}\leq \left(\frac{C\, 4^{is}t^2L}{r^{2s}}\right)^{1/t}\|u_\eps^{-1}\|_{L^t(B_{r_i})},
\]
so that, for $t_i=\gamma^{i}m$, $i\geq 0$, we have
\[
\|u_\eps^{-1}\|_{L^{t_{i+1}}(B_{r_{i+1}})}\leq \left(C\, 4^{is}t_i^2L\right)^{1/t_i}\|u_\eps^{-1}\|_{L^{t_i}(B_{r_i})}.
\]
This can be iterated estimating the infinite product (we precisely compute only the relevant one) as 
\[
\begin{split}
\sup_{B_{1/2}} u_\eps^{-1}&\leq \lim_{i\to +\infty}\|u_\eps^{-1}\|_{L^{t_{i+1}}(B_{r_{i+1}})}\leq C\prod_{i=0}^{+\infty}L^{1/t_i}\|u_\eps^{-1}\|_{L^m(B_{2/3})}\\
&\leq C\,L^{\frac{1}{m}\frac{\gamma}{\gamma-1}}\|u_\eps^{-1}\|_{L^m(B_{2/3})}.
\end{split}
\]
Applying \eqref{it1} on the right hand side, we obtain
\begin{equation}
\label{it2}
\sup_{B_{1/2}} u_\eps^{-1}\leq C_mL^{1-\frac 1 m+\frac{1}{m}\frac{\gamma}{\gamma-1}}\|u_\eps^{-1}\|_{L^1(B_{5/6})}^{1/m}.
\end{equation}

{\bf  Step 3}: {\em Conclusion.}

Finally we estimate the right hand side of \eqref{it2} testing the equation with $\eta^2/u_\eps$, for $\eta$ as in \eqref{eta}, with $\rho_2=5/6$ and $\rho_1=1$. The following elementary inequality holds for $a, b>0$, $c, d\geq 0$:
\[
(a-b)\left(\frac{c^2}{a}-\frac{d^2}{b}\right)=c^2+d^2-\left(\frac{b}{a}c^2+\frac{a}{b}d^2\right)\leq (c-d)^2.
\]
The latter inequality immediately yields
\[
\int_{\R^N}\frac{\eta^2}{u_\eps}\, dx=\int_{\R^{2N}}\!(u(x)-u(y))\left(\frac{\eta^2(x)}{u_\eps(x)}-\frac{\eta^2(y)}{u_\eps(y)}\right)d\mu\leq \int_{\R^{2N}}\!(\eta(x)-\eta(y))^2d\mu\leq CL,
\]
so that 
\begin{equation}
\label{hja}
\int_{B_{5/6}}u_\eps^{-1}\, dx\leq C\, L.
\end{equation}
Inserting into \eqref{it2} and letting $\eps\to 0$ gives
\[
\sup_{B_{1/2}} u^{-1}\leq C_mL^{1+\frac{1}{m}\frac{1}{\gamma-1}},
\]
which, rearranged, gives \eqref{lbtorsion} being $m$ arbitrarily large. 
\end{proof}

\begin{remark}
The previous proof is completely local in nature and a similar estimate holds true for the torsion function associated to elliptic equations with measurable coefficients.  In this more classical framework it may be interesting to know wether an optimal bound from below is true without the $\delta$ mentioned in the proposition. More precisely, if $A=(a_{i j})$ is measurable and  symmetric and $u$ is the weak solution of  
\[
\begin{cases}
{\rm div}(A\nabla u)=1 &\text{in $B_1$},\\
u=0&\text{in $\partial B_1$},
\end{cases}
\qquad\text{with}\qquad  |\xi|^2\leq a_{i  j}\xi_i\xi_j\leq L |\xi|^2,
\]
one may ask if there exists a constant $c=c(N)$ such that
\[
\inf_{B_{1/2}}u\geq \frac{c(N)}{L}.
\]
\end{remark}

\begin{remark}
\label{rems}
The dependance of $c_\delta$ on $s$ as $s\to 1^-$ can be singled out as follows. In estimate \eqref{cac1} one has (see the dependance in \cite[Lemma 2.3]{MM}) $C=C(N)/(1-s)$. Hence in \eqref{it1} we have $C_m=\tilde{C}_m(N)/(1-s)^{1-\frac 1 m}$. Similarly, in \eqref{hja}, one has $C=C(N)/(1-s)$.  Moreover, due to \cite{BBM, BPS} the Sobolev-Poincar\'e embedding can be written as 
\[
 c(N) \|v\|_{L^{2^*}(B_r)}^2\leq (1-s)[v]_s^2+\frac{C(N)}{r^{2s}}\|v\|_{L^2(B_r)}^2
\]
so that the second part of the iteration is independent of $s$. Following the proof one eventually gets $c_\delta=(1-s)\tilde{c}(N, \delta)$. This is consistent with the case $k(x, y)\equiv 1$ and ${\cal K}=(-\Delta)^s$: indeed, if $u\in W^{s,2}_0(B_r)$ solves $(-\Delta)^s u_s=1$, then $(1-s)u_s\to u$ as $s\to 1^-$, where $u$ is the classical torsion function for the Laplacian.
\end{remark}
\section{Proof of the main result}

We start with the core nonlocal estimate providing a weak Harnack inequality with best constant of order $L^{1+\delta}$, $\delta>0$. The fact that we allow arbitrary $K\geq 0$ in the statement is essential in order to deal with tail terms later in Corollary \ref{cor}, where we localize the condition $u\geq 0$ in $\R^N$.

\begin{lemma}[Weak Harnack inequality]
Let \eqref{condk} holds and $u\in\widetilde W^{s,2}(B_{R})$ satisfies
\begin{equation}
\label{sups}
\begin{cases}
{\cal K}u\geq -K& \text{weakly in $B_{R}$} \\
u\geq 0 & \text{in $\R^N$,}
\end{cases}
\qquad R>0, \ K\geq 0.
\end{equation}
Then,  for any $\delta>0$ there exists $\sigma_\delta=\sigma(N, s, \delta)\in (0, 1)$ such that 
\begin{equation}
\label{temp}
\inf_{B_{R/2}}u\geq\frac{\sigma_\delta}{L^{1+\delta}}\left(\dashint_{B_{2R}\setminus B_{3R/2}} u\,dx-C K R^{2s}\right),
\end{equation}
where $C=C(N)$.
\end{lemma}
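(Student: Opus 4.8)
The plan is to produce an explicit lower barrier for $u$ on $B_R$, reducing matters by two applications of the Comparison Principle to the torsion estimate of Proposition~\ref{tor}. Only the mass of $u$ on the annulus $A:=B_{2R}\setminus B_{3R/2}$ should enter, so I would first replace the exterior values of $u$ by the truncation $h:=u\,\mathbf 1_{A}$, which by the sign hypothesis satisfies $0\le h\le u$ in $\R^N$; since $h$ vanishes on a neighbourhood of $\overline{B_R}$, the jump of $\mathbf 1_A$ is harmless there and $h\in\widetilde W^{s,2}(B_R)$ (the only ingredient being $u\in L^1(A)$, which is contained in $u\in\widetilde W^{s,2}(B_R)$). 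Next, let $\phi\in\widetilde W^{s,2}(B_R)$ solve ${\mathcal K}\phi=-K$ in $B_R$, $\phi=h$ in $B_R^c$, obtained by the variational argument recalled in Section~2. Since ${\mathcal K}\phi=-K\le{\mathcal K}u$ weakly in $B_R$ and $\phi=h\le u$ in $B_R^c$, the Comparison Principle gives $u\ge\phi$ in $\R^N$; because $h\equiv 0$ on $B_{R/2}$, there $\phi$ coincides with $\zeta:=\phi-h\in W^{s,2}_0(B_R)$, so it remains to bound $\zeta$ from below on $B_{R/2}$.

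To that end I would identify the equation for $\zeta$. Testing against $\varphi\in W^{s,2}_0(B_R)$, using $\mathrm{supp}\,\varphi\subset B_R$, $\mathrm{supp}\,h\subset A$ and $B_R\cap A=\emptyset$ to expand $(h(x)-h(y))(\varphi(x)-\varphi(y))=-h(x)\varphi(y)-h(y)\varphi(x)$, and invoking the symmetry of $d\mu$, one obtains ${\mathcal K}\zeta=2F-K$ in $B_R$, $\zeta\equiv 0$ in $B_R^c$, where $F(x):=\int_A h(y)\,k(x,y)\,|x-y|^{-N-2s}\,dy$. The crucial point is a \emph{constant} lower bound for $F$ on $B_R$: for $x\in B_R$ and $y\in A$ one has $|x-y|\le 3R$ and $k(x,y)\ge1$, whence
\[
F(x)\ \ge\ \frac{1}{(3R)^{N+2s}}\int_A u\ =\ c_1\,R^{-2s}\dashint_A u ,\qquad c_1=c_1(N,s):=\frac{|A|}{3^{N+2s}R^{N}}\ >\ 0 .
\]
Set $m:=2c_1R^{-2s}\dashint_A u-K$ and let $w$ be the torsion function of $B_R$ (${\mathcal K}w=1$ in $B_R$, $w\equiv0$ outside). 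Whenever $m\ge0$ we have ${\mathcal K}(mw)=m\le 2F-K={\mathcal K}\zeta$ in $B_R$ with $mw\equiv0\equiv\zeta$ in $B_R^c$, so the Comparison Principle gives $\zeta\ge mw$, and Proposition~\ref{tor} yields, on $B_{R/2}$,
\[
u\ \ge\ \phi\ =\ \zeta\ \ge\ m\inf_{B_{R/2}}w\ \ge\ \frac{c_\delta}{L^{1+\delta}}\Bigl(2c_1\dashint_A u-KR^{2s}\Bigr).
\]

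Finally I would conclude by a dichotomy on the sign of $m$. Fix $C:=1/(2c_1)$, which since $s<1$ is bounded by a purely dimensional constant (and enlarging $C$ only weakens \eqref{temp}). If $m<0$, i.e.\ $\dashint_A u<CKR^{2s}$, the right-hand side of \eqref{temp} is $\le 0$ and the estimate is trivial from $u\ge0$. If $m\ge0$, choose $\sigma_\delta:=\min\{2c_1c_\delta,\tfrac12\}\in(0,1)$, so that $2c_1c_\delta-\sigma_\delta\ge0$; then, using $\dashint_A u\ge KR^{2s}/(2c_1)$,
\[
\frac{c_\delta}{L^{1+\delta}}\Bigl(2c_1\dashint_A u-KR^{2s}\Bigr)\ -\ \frac{\sigma_\delta}{L^{1+\delta}}\Bigl(\dashint_A u-CKR^{2s}\Bigr)\ \ge\ \frac{\sigma_\delta}{L^{1+\delta}}\Bigl(C-\frac1{2c_1}\Bigr)KR^{2s}\ \ge\ 0 ,
\]
which combined with the previous display gives \eqref{temp}.

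The one delicate point I expect is the bookkeeping of the source term $-K$: the constant multiplying $KR^{2s}$ in \eqref{temp} must be purely dimensional, which is exactly why the lower bound for $F$ may use only $k\ge 1$ and the crude size bound $|x-y|\le 3R$ on the annulus, and why the dichotomy is needed — when $\dashint_A u$ is small compared to $KR^{2s}$ the inequality is vacuous and one simply falls back on $u\ge 0$. Everything else is a routine combination of the Comparison Principle with the torsion estimate of Proposition~\ref{tor}.
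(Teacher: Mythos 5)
Your proof is correct and rests on exactly the same mechanism as the paper's: truncate $u$ to the annular data $h=u\chi_{B_{2R}\setminus B_{3R/2}}$, observe that $h$ induces a nonlocal source in $B_R$ of size $\gtrsim R^{-2s}\dashint u$, and feed that source into the torsion lower bound of Proposition~\ref{tor} via the Comparison Principle, with a dichotomy on the size of $\dashint u$ relative to $KR^{2s}$ to absorb the $-K$.

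The only difference is organizational. The paper builds the single explicit barrier $w=\theta M v+\chi u$ (torsion plus truncated data), checks $\mathcal K w\le -\theta M\le \mathcal K u$ directly from the weak estimate $\mathcal K(\chi u)\le -2\theta M$, and applies comparison once. You instead introduce the auxiliary solution $\phi$ (with data $-K$ and exterior value $h$), apply comparison to get $u\ge\phi$, then decompose $\phi=\zeta+h$, identify the equation $\mathcal K\zeta=2F-K$, and apply comparison a second time to get $\zeta\ge mw$. This detour through $\phi$ is not needed: since $\mathcal K(mw+h)=m-2F\le -K\le\mathcal K u$ and $mw+h=h\le u$ outside $B_R$, the function $mw+h$ is already a valid lower barrier for $u$, which is (up to the naming of constants $\theta$ versus $2c_1$) precisely the paper's barrier. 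Your explicit computation of the source $F$ is a clean way to see where the averaged annular mass enters, and the handling of the two cases $m\ge0$ and $m<0$ matches the paper's dichotomy $M\ge 1/\theta$ versus $M<1/\theta$ after the normalization $K=1$.
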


\begin{proof} By the scaling argument described at the beginning of Proposition \ref{tor}, we can assume $R=1$. Let $v$ solve \eqref{torsion} in $B_1$, choose $\lambda\in(0,1)$ (to be determined later) and set
\[
M=\dashint_{B_2\setminus B_{3/2}} u\,dx, \qquad  w=\lambda M v+\chi_{B_2\setminus B_{3/2}}u.
\]
Since $\chi_{B_2\setminus B_{3/2}}u\in \widetilde{W}^{s,2}(B_1)$, we can compute ${\cal K}(\chi_{B_2\setminus B_{3/2}}u)$, setting for simplicity $\chi=\chi_{B_2\setminus B_{3/2}}$. For any $\varphi\in W^{s,2}_0(B_1)$, $\varphi\geq 0$, it holds
\[
\int_{\R^{2N}}(\chi(x) u(x)-\chi(y) u(y))(\varphi(x)-\varphi(y))\, d\mu=
-2\int_{B_1\times B_1^c}\varphi(x)\chi(y)u(y)\, d\mu
\]
where we used the symmetry of $\mu$ and the fact that ${\rm supp}(\chi u)\subseteq B_2\setminus B_{3/2}^c$. Since both $u$ and $\varphi$ are nonnegative and $|x-y|\leq 7/2$ for $x\in B_1$ and $y\in B_{3/2}^c$, we have
\[
\int_{B_1\times B_1^c}\varphi(x)\chi(y)u(y)\, d\mu =\int_{B_1\times (B_2\setminus B_{3/2})}\varphi(x)u(y)\, d\mu\geq \theta\,\dashint_{B_2\setminus B_{3/2}} u\,dy\int_{B_1}\varphi\, dx,
\]
for some $\theta=\theta(N)<1$, which implies
\[
{\cal K}(\chi_{B_2\setminus B_{3/2}}u)\leq - 2\theta\, M\qquad \text{weakly in $B_1$}.
\]
Therefore, by linearity and ${\cal K}v=1$,
\[
{\cal K} w\leq M(\lambda - 2\theta)\qquad \text{weakly in $B_1$}.
\]
We choose $\lambda=\theta$ in the definition of $w$ and, for  $\delta>0$, let $c_\delta\in \ ]0, 1]$ be the constant given in Proposition \ref{tor}. We claim that \eqref{temp} holds with $\sigma_\delta=\theta \, c_\delta$, $C=1/\theta$.
Indeed,  if $K=0$ in \eqref{sups}, we observe that
 \[
 {\cal K} w\leq - \theta \,M\leq 0\leq {\cal K}u\quad \text{weakly in $B_1$},\qquad  \text{$w\leq u$ in $B_1^c$},
 \]
 so that the weak comparison principle implies $u\geq w$ in $B_1$. In particular
 \begin{equation}
 \label{concl}
 \inf_{B_{1/2}}u\geq \theta M \inf_{B_{1/2}} v\geq  \frac{\theta\, c_\delta}{L^{1+\delta}}\,\dashint_{B_2\setminus B_{3/2}} u\,dx
 \end{equation}
 by Proposition \ref{tor}.  If $K>0$, we can use linearity to reduce to $K=1$ and prove
 \begin{equation}
 \label{kl}
 \inf_{B_{1/2}}u\geq \frac{\theta\, c_\delta}{L^{1+\delta}}\left( M-\frac{1}{\theta}\right).
 \end{equation}
 Clearly we can suppose that $M\geq 1/\theta$, since otherwise the right hand side of \eqref{kl} is negative and the inequality is trivially satisfied since $u\geq 0$ in $B_1$ by assumption. In this case 
 \[
 {\cal K} w\leq -\theta\,M\leq -1\leq {\cal K} u\quad \text{weakly in $B_1$}
 \]
 and the (stronger) conclusion \eqref{concl} follows as before by comparison.
 
\end{proof}

We now weaken the global condition  $u\geq 0$ in $\R^N$ assumed in the previous lemma. To this end we define 
\[
{\rm Tail}(u, R)=R^{2s}\int_{B_R^c}\frac{|u|}{|y|^{N+2s}}\, dy.
\]

\begin{corollary}
\label{cor}
Let \eqref{condk} holds. For any $\delta>0$ there exists $\sigma_\delta=\sigma(N, s, \delta)\in \ ]0, 1[$, $C_\delta=C(N, s, \delta)\geq 0$ such that any $u\in\widetilde W^{s,2}(B_{R})$ satisfying
\[
\begin{cases}
{\cal K}u=0& \text{weakly in $B_{R}$} \\
u\geq 0 & \text{in $B_{2R}$,}
\end{cases}
\qquad R>0, 
\]
 fulfills
\begin{equation}
\label{temp2}
\inf_{B_{R/2}}u\geq\frac{\sigma_\delta}{L^{1+\delta}}\,\dashint_{B_{2R}\setminus B_{3R/2}} u\,dx -\frac{C_\delta}{L^\delta}\, {\rm Tail}(u_-, 2R).
\end{equation}
\end{corollary}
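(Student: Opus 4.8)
The plan is to reduce Corollary \ref{cor} to the Weak Harnack inequality (Lemma) by replacing the given $u$, which is nonnegative only on $B_{2R}$, with its \emph{positive part} truncated appropriately, and then controlling the error produced by this surgery via the tail term. As before, by the scaling argument at the start of Proposition \ref{tor}, I would assume $R=1$. Set $\bar u = u_+ = u + u_-$ (recall $u_- = \min\{u,0\}\le 0$), so that $\bar u \ge 0$ in all of $\R^N$, $\bar u = u$ on $B_2$ (since $u\ge 0$ there), and hence $\inf_{B_{1/2}}\bar u = \inf_{B_{1/2}} u$ and $\dashint_{B_2\setminus B_{3/2}}\bar u\,dx = \dashint_{B_2\setminus B_{3/2}} u\,dx$. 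The point is that $\bar u$ satisfies the hypothesis $\bar u\ge 0$ in $\R^N$ of the Lemma, at the cost of no longer solving ${\cal K}\bar u = 0$ exactly: one has ${\cal K}\bar u = {\cal K}u - {\cal K}u_- = -{\cal K}u_-$ weakly in $B_1$.

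The key step is therefore to bound ${\cal K}u_-$ from above by a multiple of a tail quantity. For $\varphi\in W^{s,2}_0(B_1)$ with $\varphi\ge 0$, since $u_- \equiv 0$ on $B_2 \supset B_1$,
\[
\langle {\cal K}u_-, \varphi\rangle = \int_{\R^{2N}}(u_-(x)-u_-(y))(\varphi(x)-\varphi(y))\,d\mu = -2\int_{B_1\times B_1^c}\varphi(x)\,u_-(y)\,d\mu,
\]
using the symmetry of $\mu$ and that $\varphi$ and $u_-$ have disjoint supports restricted to the relevant region. Since $u_-\le 0$ and $u_-\equiv 0$ on $B_2$, the integrand is nonnegative and supported on $y\in B_2^c$; bounding $k\le L$ and $|x-y|\ge |y|/C$ for $x\in B_1$, $y\in B_2^c$, one gets
\[
\langle {\cal K}u_-, \varphi\rangle \le C L \int_{B_1}\varphi\,dx \int_{B_2^c}\frac{|u_-(y)|}{|y|^{N+2s}}\,dy = C L\,{\rm Tail}(u_-, 2)\int_{B_1}\varphi\,dx,
\]
so that ${\cal K}\bar u = -{\cal K}u_- \ge -CL\,{\rm Tail}(u_-,2)$ weakly in $B_1$. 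Thus $\bar u$ satisfies the hypothesis \eqref{sups} of the Lemma with $R=1$ and $K = CL\,{\rm Tail}(u_-, 2)$.

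Applying the Weak Harnack inequality \eqref{temp} to $\bar u$ then gives
\[
\inf_{B_{1/2}}u = \inf_{B_{1/2}}\bar u \ge \frac{\sigma_\delta}{L^{1+\delta}}\left(\dashint_{B_2\setminus B_{3/2}}\bar u\,dx - C\,K\right) = \frac{\sigma_\delta}{L^{1+\delta}}\,\dashint_{B_2\setminus B_{3/2}}u\,dx - \frac{\sigma_\delta\, C^2 L}{L^{1+\delta}}\,{\rm Tail}(u_-, 2),
\]
and since $\sigma_\delta < 1$, the error term is bounded by $(C_\delta/L^\delta)\,{\rm Tail}(u_-, 2)$ for an appropriate $C_\delta = C(N,s,\delta)$, which is exactly \eqref{temp2} after undoing the scaling. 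The main obstacle I anticipate is the careful bookkeeping in the tail estimate — specifically verifying that the geometric constant relating $|x-y|$ to $|y|$ for $x\in B_1$, $y\in B_2^c$ is dimensional only, that the factor of $L$ appearing in $K$ indeed combines with $L^{-1-\delta}$ to produce the stated $L^{-\delta}$ decay of the error coefficient, and confirming that the substitution $\bar u = u_+$ lies in $\widetilde W^{s,2}(B_1)$ so that ${\cal K}\bar u$ is well defined (this is where the $L^\infty$ or tail-integrability hypothesis on $u$ is used). None of these is deep, but they must be done cleanly for the exponent on $L$ in the error term to come out as claimed.
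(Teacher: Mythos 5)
Your proposal is correct and takes essentially the same approach as the paper: write $u = u_+ + u_-$, bound ${\cal K} u_-$ from above in $B_R$ by $CL$ times the tail of $u_-$ using that $u_-$ vanishes in $B_{2R}$ and $|x-y|\gtrsim |y|$ for $x\in B_R$, $y\in B_{2R}^c$, conclude ${\cal K}u_+ = -{\cal K}u_- \geq -K$ with $K\simeq L\,{\rm Tail}(u_-,2R)/R^{2s}$, and apply the weak Harnack lemma to $u_+$. One small slip: with the paper's convention $u_-=\min\{u,0\}\le 0$ one has $u_+ = u - u_-$ (not $u+u_-$, which would fail to be nonnegative); your subsequent identity ${\cal K}\bar u = {\cal K}u - {\cal K}u_-$ shows you meant the correct one, so this is just a typo.
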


\begin{proof}
We bound ${\cal K} u_-$ in $B_R$ as follows: since $u_-\equiv 0$ in $B_{2R}$, for any $\varphi\in W^{s,2}_0(B_R)$, $\varphi\geq 0$, it holds
\[
\begin{split}
\int_{\R^{2N}}(u_-(x)-u_-(y))(\varphi(x)-\varphi(y))\, d\mu&=-2\int_{B_R\times B_{2R}^c}\varphi(x)u_-(y)\, d\mu\\
&\leq L\int_{B_R\times B_{2R}^c}\frac{\varphi(x)|u_-(y)|}{|x-y|^{N+2s}}\, dx\, dy.
\end{split}
\]
Noting that $|x-y|\geq |y|/2$ for $x\in B_R$ and $y\in B_{2R}$, we see that
\[
{\cal K}(u_-)\leq CL \int_{B_{2R}^c}\frac{|u_-|}{|y|^{N+2s}}\, dy \qquad \text{weakly in $B_R$}
\]
for some $C=C(N)$. Thus, since $u=u_++u_-$, it holds
\[
{\cal K}(u_+)=-{\cal K} u_-\geq -CL \int_{B_{2R}^c}\frac{|u_-|}{|y|^{N+2s}}\, dy
\]
and applying \eqref{temp} to $u_+$ gives the claim.

\end{proof}

Now we are ready to prove the main result. The condition $u\in L^\infty(\R^N)$ is assumed only for simplicity, allowing the bound ${\rm Tail}(u, R)\leq C\|u\|_\infty$.
\begin{theorem}
\label{mt}
Let \eqref{condk} hold and $u\in \widetilde{W}^{s,2}(B_1)\cap L^\infty(\R^N)$ weakly solve ${\cal K} u=0$ in $B_1$. Then, for any $\delta>0$ there exist $C_\delta\geq 0$, $a_\delta\in \ ]0, 2s[$ such that 
\[
{\rm osc}(u, r)\leq C_\delta L \, \|u\|_{\infty}\, r^{a_\delta/L^{1+\delta}}, \qquad r\leq 1
\]

\end{theorem}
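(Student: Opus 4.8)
The plan is to run the standard oscillation-decay iteration, but with the key geometric input being the weak Harnack inequality of Corollary \ref{cor} rather than a De Giorgi--Moser lemma, so that the decay rate per dyadic step is $1 - c/L^{1+\delta}$ instead of $1 - \exp(-CL^\beta)$. First I would reduce to showing an oscillation estimate on dyadic balls: it suffices to prove that there exists $\tau_\delta \in (0,1)$ of the form $\tau_\delta = 1 - c_\delta/L^{1+\delta}$ such that
\[
{\rm osc}(u, 2^{-(n+1)}) \leq \tau_\delta\, {\rm osc}(u, 2^{-n}) + (\text{tail error at scale }2^{-n}),
\]
and then iterate. Since $u$ solves ${\cal K}u=0$ in $B_1$, rescaling $u_n(x) = u(2^{-n}x)$ keeps the same equation (with rescaled, still admissible kernel), so it is enough to carry out one step at unit scale with $R = 1/4$ (so that $B_{2R}=B_{1/2}\subseteq B_1$).

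For the single step, set $m_r = \inf_{B_r} u$, $M_r = \sup_{B_r} u$, and consider the two nonnegative functions $u - m_{1/2}$ and $M_{1/2} - u$ on $B_{1/2}$; each is a solution of ${\cal K}(\cdot)=0$ in $B_{1/4}$ and is $\geq 0$ in $B_{1/2} = B_{2R}$. Apply Corollary \ref{cor} with $R = 1/4$ to whichever of the two has larger average over the annulus $B_{1/2}\setminus B_{3/8}$; since the two averages add up to $M_{1/2} - m_{1/2}$, the larger one is at least $\frac12 {\rm osc}(u, 1/2)$, up to correcting for the fact that the annulus-average is not literally the oscillation — here one uses that the relevant annulus sits inside $B_{1/2}$ and a one-sided bound suffices, so one gets
\[
\inf_{B_{1/8}}(\text{that function}) \geq \frac{\sigma_\delta}{2L^{1+\delta}}\,{\rm osc}(u,1/2) - \frac{C_\delta}{L^\delta}\,{\rm Tail}\big((\,\cdot\,)_-, 1/2\big).
\]
Rewriting the left side in terms of $m_{1/8}$ or $M_{1/8}$ and adding, one obtains ${\rm osc}(u, 1/8) \leq \big(1 - \frac{\sigma_\delta}{2L^{1+\delta}}\big)\,{\rm osc}(u, 1/2) + \frac{C_\delta}{L^\delta}\,(\text{tail terms})$; the ratio $1/8$ vs $1/2$ is a fixed dyadic gap and can be absorbed by relabeling.

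The main obstacle is bookkeeping the tail terms across the iteration, because at the $n$-th step the negative part $(u_n - m_n)_-$ (after subtracting the infimum on the $n$-th ball) has a tail that depends on the \emph{previous} oscillations, not just on $\|u\|_\infty$; this is the classical issue in nonlocal oscillation decay. The standard fix, which I would follow, is to prove by induction that ${\rm osc}(u, 2^{-n}) \leq C_\delta L\|u\|_\infty\, \kappa^n$ with $\kappa = 2^{-a_\delta/L^{1+\delta}}$ for a suitable small $a_\delta$, choosing $a_\delta$ small enough (depending on $\sigma_\delta, C_\delta$ and $s$) that the geometric tail contributions, which behave like $\sum_{j<n} \kappa^j \cdot (\text{decaying kernel weight})$, are dominated by a fixed fraction of the main term $\big(1-\frac{\sigma_\delta}{2L^{1+\delta}}\big){\rm osc}(u,2^{-n})$. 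Here the factor $L^{-\delta}$ in front of the tail in Corollary \ref{cor} is exactly what makes room for this absorption uniformly in $L$. The crude bound ${\rm Tail}(u, R)\leq C\|u\|_\infty$ handles the very first tail term and explains the $L\|u\|_\infty$ prefactor; converting the dyadic decay ${\rm osc}(u, 2^{-n})\lesssim \kappa^n$ into ${\rm osc}(u,r)\lesssim r^{a_\delta/L^{1+\delta}}$ for all $r\le 1$ is then routine, and the constraint $a_\delta < 2s$ comes from requiring $\kappa > 2^{-2s}$ so that the tail sums converge with the right sign, matching the $r^{2s}$ scaling of ${\cal K}$.
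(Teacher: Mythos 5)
Your proposal is correct and follows essentially the same route as the paper's proof: apply the weak Harnack inequality of Corollary \ref{cor} to $M_n-u$ and $u-m_n$ on a dyadic sequence of balls, prove the decay ${\rm osc}(u,r_n)\leq k\,r_n^\alpha$ with $\alpha\sim a_\delta/L^{1+\delta}$ by induction, and absorb the nonlocal tail terms (which carry the factor $L^{-\delta}$) by choosing $a_\delta$ small and $k\sim L$. The only cosmetic difference is that you apply the corollary to whichever of the two truncated functions has the larger annulus average (losing a factor $1/2$), whereas the paper adds the two resulting inequalities directly; both yield the same structure, and your accounting of where $a_\delta<2s$ and the $L\|u\|_\infty$ prefactor come from matches the paper's.
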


\begin{proof}
Let $\delta>0$ be fixed and, since the equation is linear, suppose also that $\|u\|_\infty=1$.
By the monotonicity of $r\mapsto {\rm osc}(u, r)$, it suffices to prove the estimate for the radii $r_i=4^{-i}$,  $i\geq 0$. To this end we set for simplicity
\[
B_i=B_{r_i},\qquad A_i=B_{r_i}\setminus B_{3r_i/4},
\]
 and  claim by induction that there exist $\alpha>0$, $k>0$, and corresponding sequences $\{M_i\}_{i}$, $\{m_i\}_i$ (the dependance of the latters on the parameters will be omitted) such that, for all $i\geq 0$, it holds
\begin{equation}
\label{ind}
M_i\geq \sup_{B_i}u\geq \inf_{B_i} u\geq m_i,\quad m_i\leq m_{i+1}\leq M_{i+1}\leq M_i,\quad M_i-m_i= k\,r_i^{\alpha}.
 \end{equation}
 The dependance of  $\alpha$ and $k$ on $L$ and $\delta$ will be determined later through various conditions. Since $\|u\|_{\infty}=1$, the inductive basis will be satisfied for 
 \begin{equation}
 \label{cond1}
 M_{0}= - m_{0}=k,\qquad k\geq 2.
 \end{equation}
 Suppose that $\{M_i\}$ and $\{m_i\}$ are chosen for $0\leq i\leq n$. 
Notice that $M_n-u$ and $u-m_n$ both satisfy the hypotheses of Corollary \ref{cor} for $R=r_n/2$, hence 
 \[
 \begin{split}
& \inf_{B_{n+1}} (M_n-u)\geq \frac{\sigma_\delta}{L^{1+\delta}}\dashint_{A_n}(M_n-u)\, dx -\frac{C_\delta}{L^\delta}\,{\rm Tail}((M_n-u)_-, r_n),\\
& \inf_{B_{n+1}} (u-m_n)\geq \frac{\sigma_\delta}{L^{1+\delta}}\dashint_{A_n}(u-m_n)\, dx -\frac{C_\delta}{L^\delta}\,{\rm Tail}((u-m_n)_-, r_n),
\end{split}
 \]
so that, adding up, we get
 \[
 \begin{split}
 M_n-m_n&-{\rm osc}(u, r_{n+1})=\inf_{B_{n+1}} (M_n-u)+\inf_{B_{n+1}}(u-m_n)\\
 &\geq \frac{\sigma_\delta}{L^{1+\delta}}(M_n-m_n)- \frac{C_\delta}{L^\delta}\big({\rm Tail}((M_n-u)_-, r_n)+{\rm Tail}((u-m_n)_-, r_n)\big).
 \end{split}
 \]
 Rearranging, we get
 \[
 {\rm osc}(u, r_{n+1})\leq \left(1-\frac{\sigma_\delta}{L^{1+\delta}}\right)(M_n-m_n)+\frac{C_\delta}{L^\delta}\big({\rm Tail}((M_n-u)_-, r_n)+{\rm Tail}((u-m_n)_-, r_n)\big).
 \]
 The tail terms can be estimated by strong induction as in the proof of \cite[Theorem 5.4]{IMS}, giving for $\alpha<2s$,
 \[
 {\rm Tail}((M_n-u)_-, r_n)\leq C\big(k \,S(\alpha)\, r_n^\alpha+ r_n^{2s}\big),
 \]
 where
 \[
 S(\alpha)=\sum_{h=0}^{+\infty}\frac{4^{\alpha h}-1}{4^{2sh}}=\frac{1}{1-4^{\alpha-2s}}-\frac{1}{1-4^{-2s}}\leq C\alpha.
 \]
 Therefore, being $r_n<1$, for $\alpha<2s$ we get, by the induction hypothesis and  $r_{n}=4 r_{n+1}$,
 \begin{equation}
 \label{osc2}
 \begin{split}
 {\rm osc}(u, r_{n+1})&\leq \left(1-\frac{\sigma_\delta}{L^{1+\delta}}\right)(M_n-m_n)+\frac{C_\delta}{L^\delta}\left(k\,\alpha\, r_{n}^\alpha +r_n^{2s}\right)\\
& \leq k\, \left[4^\alpha\left(1-\frac{\sigma_\delta}{L^{1+\delta}}\right) +\frac{C_\delta}{L^\delta}\,\alpha\right]r_{n+1}^\alpha+ \frac{C_\delta}{L^\delta}\,r_{n+1}^{\alpha}.
 \end{split}
 \end{equation}
 We start choosing $\alpha=a_\delta/L^{1+\delta}$ such that 
\begin{equation}
\label{poi}
4^\alpha\left(1-\frac{\sigma_\delta}{L^{1+\delta}}\right)\leq 1-\frac{\sigma_\delta}{2\,L^{1+\delta}}, 
\end{equation}
i.e.
\begin{equation}
\label{calpha}
\alpha\leq \log_4\left(1+\frac{\sigma_\delta}{2\,L^{1+\delta}-2\sigma_\delta}\right).
\end{equation}
Observe that for $L\geq 1$, $\frac 1 2\geq \sigma_\delta$ (which we may assume), there exists a constant $a_\delta$ such that
the previous inequality is satisfied for all $\alpha\leq  a_\delta/L^{1+\delta}$, thus giving \eqref{poi}. Eventually choosing a smaller $a_\delta$ (depending only on $\delta$), it also holds
\[
4^\alpha\left(1-\frac{\sigma_\delta}{L^{1+\delta}}\right) +\frac{C_\delta}{L^\delta}\,\alpha\leq
 1-\frac{\sigma_\delta}{2\,L^{1+\delta}}+\frac{C_\delta\, a_\delta}{L^{1+2\delta}}\leq 1-\frac{\sigma_\delta}{4\,L^{1+\delta}},
 \]
 so that for $\alpha=a_\delta/L^{1+\delta}$, \eqref{osc2} becomes
 \[
  {\rm osc}(u, r_{n+1})\leq k \left(1-\frac{\sigma_\delta}{4\,L^{1+\delta}}\right)r_{n+1}^\alpha+ \frac{C_\delta}{L^\delta}\,r_{n+1}^{\alpha}.
  \]
  Finally we choose
  \[
  k=L\,\frac{4\,C_\delta}{\sigma_\delta},
  \]
  to get 
  \[
  {\rm osc}(u, r_{n+1})\leq k \,r_{n+1}^\alpha,
  \]
 which ensures that $M_{n+1}$ and $m_{n+1}$ can be constructed satisfying \eqref{ind}.
\end{proof}

\begin{remark}
It is worth  outlining the dependance of $a_\delta$ on $s$ as $s\to 1$. As already pointed out, our exponent blows down as $s\to 1^-$. Indeed, using remark \ref{rems}, one gets $\sigma_\delta, \, C_\delta \simeq (1-s)$ in \eqref{temp2}. The only relevant place where $(1-s)$ is involved in the choice of the H\"older exponent is therefore \eqref{calpha}, which forces $a_\delta(s)=\tilde{a}(N, \delta) (1-s)$.
\end{remark}

\end{document}